 \newcommand{\R}{{\mathbb R}}
 \newcommand{\C}{{\mathbb C}}
\newcommand{\tildebaja}{{\raise.17ex\hbox{$\scriptstyle\sim$}}}
\newcommand{\dist}{{\rm dist}}
\newcommand{\proy}{\pi}
\newcommand{\Dd}{{\EuScript D}}
\newcommand{\Tt}{{\EuScript T}}
\newcommand{\Qq}{{\EuScript Q}}
\newcommand{\Rr}{{\EuScript R}}
\newcommand{\Ss}{{\EuScript S}}
\newcommand{\Int}{\operatorname{Int}}
\theoremstyle{plain}
\newtheorem{thm}{Theorem}[section]
\newtheorem{prop}[thm]{Proposition}
\newtheorem{lem}[thm]{Lemma}
\newtheorem{prob}[thm]{Problem}
\theoremstyle{definition}
\newtheorem{defi}[thm]{Definition}
\theoremstyle{remark}
\newtheorem{remark}[thm]{Remark}
\newcounter{substep}
\def\thesubstep{\arabic{substep}}
\newcounter{subsubstep}
\def\thesubsubstep{\arabic{subsubstep}}
\begin{document}
\title{The open quadrant problem: A topological proof}
\author{Jos\'e F. Fernando, J. M. Gamboa, Carlos Ueno\thanks{First author supported by Spanish GAAR MTM2011-22435, Grupos UCM 910444 and the ``National Group for Algebraic and Geometric Structures, and their Applications'' (GNASA - INdAM). His one year research stay in the Dipartimento di Matematica of the Universit\`a di Pisa is partially supported by MECD grant PRX14/00016. Second author supported by Spanish GAAR MTM2011-22435 and Grupos UCM 910444. Third author supported by `Scuola Galileo Galilei' Research Grant at the Dipartimento di Matematica of the Universit\`a di Pisa and Spanish GAAR MTM2011-22435.}\\[0.5cm]
Departamento de \'Algebra, Facultad de Ciencias Matem\'aticas\\
Universidad Complutense de Madrid \\ 28040 MADRID (SPAIN)\\
{\tt josefer@mat.ucm.es}\ \ \ \ \ \ {\tt jmgamboa@mat.ucm.es} \\[0.5cm]
Dipartimento di Matematica, Universit\`a di Pisa\\ 56127 PISA (ITALY)\\
{\tt jcueno@mail.dm.unipi.it}}
\date{}

\maketitle

\begin{abstract}
\noindent In this work we present a new polynomial map $f:=(f_1,f_2):\R^2\to\R^2$ whose image is the open quadrant $\Qq:=\{x>0,y>0\}\subset\R^2$. The proof of this fact involves arguments of topological nature that avoid hard computer calculations. In addition each polynomial $f_i\in\R[{\tt x},{\tt y}]$ has degree $\leq16$ and only $11$ monomials, becoming the simplest known map solving the open quadrant problem.\\[4pt] 
\noindent\em Keywords\em. Polinomal map, polynomial image, semialgebraic set, open quadrant.\\[2pt]
2000 \em Mathematics Subject Classification\em. 14P10, 26C99(primary); 52A10 (secondary).
\end{abstract}

\section{Introduction}

Although it is usually said that the first work in Real Geometry is due to Harnack \cite{h}, who obtained an upper bound for the number of connected components of a non-singular real algebraic curve in terms of its genus, modern Real Algebraic Geometry was born with Tarski's article \cite{ta}, where it is proved that the image of a semialgebraic set under a polynomial map is a semialgebraic set. We are interested in studying what might be called the `inverse problem'. In the 1990 \em Oberwolfach Reelle algebraische Geometrie \em week \cite{g} the second author proposed: 

\begin{prob}\label{prob0}
Characterize the (semialgebraic) subsets of $\R^m$ that are either polynomial or regular images of $\R^n$. 
\end{prob}

A map $f:=(f_1,\ldots,f_m):\R^n\to\R^m$ is a \em polynomial map \em if its components $f_k\in\R[{\tt x}]:=\R[{\tt x}_1,\ldots,{\tt x}_n]$ are polynomials. Analogously, $f$ is a \em regular map \em if its components can be represented as quotients $f_k=\frac{g_k}{h_k}$ of two polynomials $g_k,h_k\in\R[{\tt x}]$ such that $h_k$ never vanishes on $\R^n$. A subset $S\subset\R^n$ is \em semialgebraic \em when it admits a description by a finite boolean combination of polynomial equalities and inequalities.

Open semialgebraic sets deserve a special attention in connection with the real Jacobian Conjecture \cite{p}. In particular the second author stated in \cite{g} the `open quadrant problem':

\begin{prob}\label{prob}
Determine whether the open quadrant $\Qq:=\{x>0,y>0\}$ of $\R^2$ is a polynomial image of $\R^2$.
\end{prob}

This problem stimulated the interest of many specialists in the field. However, only after twelve years a first solution was found in \cite{fg1} and presented by the first author in the 2002 \em Oberwolfach Reelle algebraische Geometrie \em week \cite{fe1}.

The open quadrant problem was the germ of a more systematic study of `Polynomial and regular images of Euclidean spaces' developed by the authors during the last decade and which was the topic of the Ph.D. Thesis of the third author \cite{u0}. Since then we have worked on this issue with two main objectives: 
\begin{itemize}
\item Finding obstructions to be an either polynomial or regular image. 
\item Proving (constructively) that large families of semialgebraic sets with piecewise linear boundary (convex polyhedra, their interiors, complements and the interiors of their complements) are either polynomial or regular images of some Euclidean space. The positive answer to the open quadrant problem has been a recurrent starting point for this approach.
\end{itemize}

In \cite{fg1,fg2} we presented the first steps to approach Problem \ref{prob0}. A complete solution to Problem \ref{prob0} for the one-dimensional case appears in \cite{fe2}, whereas in \cite{fgu1,fu1,fu2,u1,u2} we approached constructive results concerning the representation as either polynomial or regular images of the semialgebraic sets with piecewise linear boundary commented above. Articles \cite{fgu2,fu3} are of different nature because we find in them new obstructions for a subset of $\R^m$ to be either a polynomial or a regular image of $\R^n$. In the first one we found some properties of the difference ${\rm Cl}(S)\setminus S$ while in the second it is shown that \em the set of points at infinite of a polynomial image of $\R^n$ is a connected set\em.

The constructive solution to the open quadrant problem provided in \cite{fg1} involves quite complicated computer calculations that the third author never liked. In fact he provided in his Ph.D. Thesis a different topological proof for the map proposed in \cite{fg1}, together with an algebraic proof involving a different polynomial map. This map has inspired the first and third authors for a short algebraic proof of the open quadrant problem involving a new polynomial map \cite{fu4} and has led us to look for a polynomial map with optimal algebraic structure whose image is the open quadrant. It is important to establish clearly the meaning of `optimal algebraic structure' \cite[\S3(A)]{fu4}. It is natural to wonder how a polynomial map looks like when completely expanded and how it compares with other polynomial maps. We care about the total degree of the involved polynomial map (the sum of the degrees of its components) and its total number of (non-zero) monomials. We would like to find a polynomial map with the less possible total degree and the less possible number of monomials. The example in \cite{fg1} has total degree $56$ and its total number of monomials is $168$. The polynomial map in \cite{fu4} has total degree $72$ and its total number of monomials is $350$. In this work we will prove:

\begin{thm}\label{main}
The open quadrant $\Qq$ is the image of the polynomial map
$$
f:\R^2\to\R^2,\ (x,y)\mapsto((x^2y^4+x^4y^2-y^2-1)^2+x^6y^4,(x^6y^2+x^2y^2-x^2-1)^2+x^6y^4).
$$
\end{thm}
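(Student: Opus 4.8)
The strategy is to show separately that the image $f(\R^2)$ is contained in the open quadrant $\Qq$, and that it covers all of $\Qq$. The inclusion $f(\R^2)\subseteq\Qq$ should be essentially immediate from the shape of the components: each $f_i$ is a sum of two squares, so $f_i\ge0$ everywhere, and one only needs to rule out the value $0$. The first component vanishes exactly when $x^6y^4=0$ \emph{and} $x^2y^4+x^4y^2-y^2-1=0$; if $x=0$ or $y=0$ the second equation forces $-y^2-1=0$ or $-1=0$, which is impossible, so in fact $f_1>0$ on all of $\R^2$, and symmetrically $f_2>0$. Hence $f(\R^2)\subseteq\Qq$.

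The surjectivity onto $\Qq$ is the substantive part, and the natural topological approach is the following. Restrict attention to the open first quadrant $\Qq$ itself as the source: I would show that $f|_\Qq:\Qq\to\Qq$ is already surjective (the rest of $\R^2$ is then irrelevant). Introduce the substitution $u=x^6y^4$, or better work with the two ``inner'' polynomials $p(x,y)=x^2y^4+x^4y^2-y^2-1$ and $q(x,y)=x^6y^2+x^2y^2-x^2-1$, so that $f=(p^2+x^6y^4,\,q^2+x^6y^4)$. The key geometric idea is a \emph{sweeping} argument: fix the ``height'' $t=x^6y^4>0$; then the level set $\{x^6y^4=t\}$ is a connected curve in $\Qq$ (it is the graph $y=(t/x^6)^{1/4}$, $x>0$), homeomorphic to $\R$. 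Along this curve, $f$ takes the form $(p^2+t,\,q^2+t)$, so its image lies on the translated ``parabola-like'' locus, and one must understand how $(p,q)$ varies along the curve. I would parametrize the curve by $x\in(0,\infty)$, analyze the limits of $p$ and $q$ as $x\to0^+$ and $x\to+\infty$ (tracking which monomials dominate), and use the intermediate value theorem to show that, as $t$ ranges over $(0,\infty)$ and the point ranges over the corresponding level curve, the pair $(p^2+t,q^2+t)$ sweeps out every point of $\Qq$. Concretely: given a target $(a,b)\in\Qq$, choose $t$ small enough that $t<a$ and $t<b$, so that one needs $p^2=a-t>0$ and $q^2=b-t>0$ with appropriate signs; then solve for a point on the level curve $\{x^6y^4=t\}$ realizing the prescribed values of $p$ and $q$, again by a connectedness/continuity argument on the one-dimensional curve.

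The main obstacle I anticipate is precisely this last coordination step: on the one-parameter level curve $\{x^6y^4=t\}$ one has \emph{two} functions $p$ and $q$ to control simultaneously with a \emph{single} parameter, so a naive intermediate-value argument will not directly pin down both. The resolution should be to also vary $t$: one really has a two-parameter family $(t,x)\mapsto(p^2+t,\,q^2+t)$ with $t>0$, $x>0$ (and $y$ determined), giving a map from a $2$-dimensional region into $\R^2$, and the task is to show this map is surjective onto $\Qq$. I expect the cleanest route is to show the map is proper onto $\Qq$ (preimages of compact subsets of $\Qq$ are compact) and that its image is open — openness via checking the Jacobian is nonzero generically, or via a direct local-surjectivity argument — and then invoke connectedness of $\Qq$ to conclude the image, being nonempty, open and closed in $\Qq$, is all of $\Qq$. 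Verifying properness will require careful control of the boundary behavior: one must check that as a point escapes to the ``edges'' of the parameter region ($x\to0^+$, $x\to\infty$, $t\to0^+$, $t\to\infty$), the image point $f$ escapes to the boundary of $\Qq$ (i.e. some coordinate tends to $0$ or to $+\infty$). Establishing that boundary dichotomy — showing $x^6y^4\to0$ forces a degeneration of $p$ or $q$ of the right kind, etc. — is the delicate computational heart of the argument, and is where the specific choice of monomials in $p$ and $q$ must be exploited.
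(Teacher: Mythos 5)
Your first half is fine: each component of $f$ is a sum of two squares that cannot vanish simultaneously (setting $x=0$ or $y=0$ in the inner polynomials leaves $-y^2-1$, $-1$ or $-x^2-1$), so $f(\R^2)\subset\Qq$; this is exactly the paper's easy inclusion. The problem is that your surjectivity argument is a plan whose two load-bearing claims are left unproved, and both are genuinely problematic. First, properness: the map $f|_{\Qq}:\Qq\to\Qq$ is \emph{not} proper. For instance $f(1/n,1)\to(4,1)$, an interior point of $\Qq$, while $(1/n,1)$ leaves every compact subset of $\Qq$; hence the preimage of a compact neighbourhood of $(4,1)$ in $\Qq$ is not compact, and ``image closed in $\Qq$'' does not follow. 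You could repair this by taking all of $\R^2$ as the source, so that only escape to infinity matters, but then the required asymptotic statement (every sequence with $\|(x_n,y_n)\|\to\infty$ has image leaving every compact subset of $\Qq$) is precisely the ``delicate computational heart'' you defer, and it is not routine: one must rule out bounded limits along curves such as $x^2y^2\to1$, $y\to\infty$, where the inner polynomials nearly cancel. Second, openness: generic nonvanishing of the Jacobian does not make the image open --- $(x,y)\mapsto(x^2,y)$ has generically nonzero Jacobian and closed, non-open image --- so you would still have to show that no point of $\Qq$ near the critical values is omitted, which is essentially the original problem again. Your sweeping idea correctly identifies that one parameter cannot control both inner polynomials at once, but reintroducing the second parameter just returns you to a two-dimensional surjectivity question with no new leverage.

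The paper resolves surjectivity by a topological device that your plan is missing. Writing $f=f_2\circ f_1$ with $f_1(x,y)=(x^2,y^2)$, it factors $f_2=h\circ g$ through $\R^3$, where $h(x,y,z)=(x^2+y^2,y^2+z^2)$ and $g$ parameterizes a surface $\Ss=g(\overline{\Qq})\subset\R^3$. Hitting a target $(A^2,B^2)$ then amounts to showing that $\Ss$ meets the fiber $h^{-1}(A^2,B^2)$, which contains the boundary circle of an explicit warped disc $\Dd$. The paper exhibits a closed disc inside (a quotient of) the parameter domain whose image has boundary loop meeting $\Dd$ transversally exactly once; such a loop generates $\pi_1(\R^3\setminus\partial\Dd)\cong\Z$, whereas any loop bounding a disc in the complement of $\partial\Dd$ is nullhomotopic, so the disc must intersect $\partial\Dd$. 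That linking argument is what replaces the properness/openness machinery, and it is the step your proposal would still need to supply or substitute for.
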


This polynomial map has total degree $28$ and its total number of monomials is $22$, which certainly improves the already known explicit solutions to the open quadrant problem. It has been constructed following a similar strategy to that in \cite[\S3]{fg1}. Our experience approaching this problem suggests us that this map is surely close to have the optimal desired algebraic structure.

The article is organized as follows. In Section \ref{s2} we present all basic notions and topological preliminaries used in Section \ref{s3} to prove Theorem \ref{main}.

\section{Topological preliminaries}\label{s2}

Denote the closed disc of center the origin and radius $A>0$ of the plane $\R^2$ with ${\mathbb D}_A$. A \em warped disc \em is a subset $\Dd_{A,\xi}:=\{z=\xi(x,y),\ x^2+y^2\le A^2\}\subset\R^3$ where $\xi:\R^2\to\R$ is a continuous function. Consider the homeomorphism
$$
\zeta:\R^3\to\R^3,\ (x,y,z)\mapsto(x,y,z-\xi(x,y))
$$
that maps $\Dd_{A,\xi}$ onto ${\mathbb D}_A\times\{0\}$. The image of $\Dd_{A,\xi}$ under a permutation of the variables of $\R^3$ will be also called a warped disc.


\begin{figure}[!ht]
\begin{center}
\begin{tabular}{c}
\begin{tikzpicture}[trim right=6.5cm]
\pgfplotsset{ticks=none}
\begin{axis}[view={45}{30}, scale=1.8, axis lines=middle, footnotesize,xlabel=$$,ylabel=$$,
xmin=-5.5,xmax=5.5,ymin=-9.5,ymax=9.5,zmin=-1,
zmax=9, unit vector ratio=]
\addplot3[opacity=0.0, fill opacity=0.2, surf, gray, samples=5,
variable=\u, variable y=\v,
domain=-5:5, y domain=-9:-3]
({u}, {v}, {0});
\addplot3[opacity=0.0, fill opacity=0.2, surf, gray, samples=5,
variable=\u, variable y=\v,
domain=-5:5, y domain=3:9]
({u}, {v}, {0});
\addplot3[opacity=0.0, fill opacity=0.2, surf, gray, samples=20,
variable=\u, variable y=\v,
domain=-5:5, y domain=0:180]
({u}, {3*cos(v)}, {3*sin(v)});
\addplot3[very thin,mark=,black] plot coordinates {
          (-5,-3,0)
          (-5,-9,0)
          (5,-9,0)
          (5,-3,0)};
\addplot3[very thin,mark=,black] plot coordinates {
          (-5,3,0)
          (-5,9,0)
          (5,9,0)
          (5,3,0)};
\addplot3[very thin,domain=0:pi,samples=20,samples y=0]
({-5},
{3*cos(deg(x))},
{3*sin(deg(x))});
\addplot3[very thin,domain=0:pi,samples=20,samples y=0]
({5},
{3*cos(deg(x))},
{3*sin(deg(x))});
\addplot3[very thick,domain=0:2*pi,samples=40,samples y=0]
({2*cos(deg(x))},
{2*sin(deg(x))},
{sqrt(9-4*(sin(deg(x)))^2)});
\addplot3[opacity=0, fill opacity=0.5, surf,samples=40,gray,
variable=\u, variable y=\v,
domain=0:2, y domain=0:360]
({u*cos(v)},{u*sin(v)},{sqrt(9-u^2*(sin(v))^2)});
\addplot3[opacity=0.0, fill opacity=0.2, surf,gray,samples=5,
variable=\u, variable y=\v,
domain=-5:5, y domain=-9:-3]
({u}, {v}, {2});
\addplot3[opacity=0.0, fill opacity=0.2, surf, gray,samples=5,
variable=\u, variable y=\v,
domain=-5:5, y domain=3:9]
({u}, {v}, {2});
\addplot3[opacity=0.0, fill opacity=0.2, surf, gray, samples=20,
variable=\u, variable y=\v,
domain=-5:5, y domain=0:180]
({u}, {3*cos(v)}, {2+3*sin(v)});
\addplot3[very thin,domain=0:pi,samples=20,samples y=0]
({-5},
{3*cos(deg(x))},
{2+3*sin(deg(x))});
\addplot3[very thin,domain=0:pi,samples=20,samples y=0]
({5},
{3*cos(deg(x))},
{2+3*sin(deg(x))});
\addplot3[very thin,mark=,black] plot coordinates {
          (-5,-3,2)
          (-5,-9,2)
          (5,-9,2)
          (5,-3,2)};
\addplot3[very thin,mark=,black] plot coordinates {
          (-5,3,2)
          (-5,9,2)
          (5,9,2)
          (5,3,2)};
\addplot3[opacity=0.0, fill opacity=0.3, surf,gray,samples=5,
variable=\u, variable y=\v,
domain=-5:5, y domain=-9:-3]
({u}, {v}, {4});
\addplot3[opacity=0.0, fill opacity=0.2, surf, gray,samples=5,
variable=\u, variable y=\v,
domain=-5:5, y domain=3:9]
({u}, {v}, {4});
\addplot3[opacity=0.0, fill opacity=0.2, surf, gray, samples=20,
variable=\u, variable y=\v,
domain=-5:5, y domain=0:180]
({u}, {3*cos(v)}, {4+3*sin(v)});
\addplot3[very thin,domain=0:pi,samples=20,samples y=0]
({-5},
{3*cos(deg(x))},
{4+3*sin(deg(x))});
\addplot3[very thin,domain=0:pi,samples=20,samples y=0]
({5},
{3*cos(deg(x))},
{4+3*sin(deg(x))});
\addplot3[very thin,mark=,black] plot coordinates {
          (-5,-3,4)
          (-5,-9,4)
          (5,-9,4)
          (5,-3,4)};
\addplot3[very thin,mark=,black] plot coordinates {
          (-5,3,4)
          (-5,9,4)
          (5,9,4)
          (5,3,4)};
\addplot3[opacity=0.0, fill opacity=0.2, surf,gray,samples=5,
variable=\u, variable y=\v,
domain=-5:5, y domain=-9:-3]
({u}, {v}, {6});
\addplot3[opacity=0.0, fill opacity=0.2, surf, gray,samples=5,
variable=\u, variable y=\v,
domain=-5:5, y domain=3:9]
({u}, {v}, {6});
\addplot3[opacity=0.0, fill opacity=0.2, surf, gray, samples=20,
variable=\u, variable y=\v,
domain=-5:5, y domain=0:180]
({u}, {3*cos(v)}, {6+3*sin(v)});
\addplot3[very thin,domain=0:pi,samples=20,samples y=0]
({-5},
{3*cos(deg(x))},
{6+3*sin(deg(x))});
\addplot3[very thin,domain=0:pi,samples=20,samples y=0]
({5},
{3*cos(deg(x))},
{6+3*sin(deg(x))});
\addplot3[very thin,mark=,black] plot coordinates {
          (-5,-3,6)
          (-5,-9,6)
          (5,-9,6)
          (5,-3,6)};
\addplot3[very thin,mark=,black] plot coordinates {
          (-5,3,6)
          (-5,9,6)
          (5,9,6)
          (5,3,6)};
\end{axis}
\end{tikzpicture}\\
\begin{tikzpicture}
\draw [thick, ->] (0,0)--(0,-0.5) node [right] {$\zeta$} -- (0,-1);
\end{tikzpicture}\\[-0.4cm]
\begin{tikzpicture}[trim right=6.5cm]
\pgfplotsset{ticks=none}
\begin{axis}[view={45}{30}, scale=1.8, axis lines=middle, footnotesize,xlabel=$$,ylabel=$$,
xmin=-5.5,xmax=5.5,ymin=-9.5,ymax=9.5,zmin=-1,
zmax=9, unit vector ratio=]
\addplot3[opacity=0.0, fill opacity=0.2, surf,gray,samples=20,colormap/hot,
variable=\u, variable y=\v,
domain=-5:5, y domain=-9:9]
({u}, {v}, {0});
\addplot3[very thin,mark=,black] plot coordinates {
          (-5,-9,0)
          (-5,9,0)
          (5,9,0)
          (5,-9,0)
          (-5,-9,0)};
\addplot3[very thick,domain=0:2*pi,samples=40,samples y=0]
({2*cos(deg(x))},
{2*sin(deg(x))},
{0});
\addplot3[opacity=0, fill opacity=0.5, surf,samples=40,gray,
variable=\u, variable y=\v,
domain=0:2, y domain=0:360]
({u*cos(v)},{u*sin(v)},{0});
\addplot3[opacity=0.0, fill opacity=0.2, surf,gray,samples=20,colormap/hot,
variable=\u, variable y=\v,
domain=-5:5, y domain=-9:9]
({u}, {v}, {2});
\addplot3[very thin,mark=,black] plot coordinates {
          (-5,-9,2)
          (-5,9,2)
          (5,9,2)
          (5,-9,2)
          (-5,-9,2)};
\addplot3[opacity=0.0, fill opacity=0.2, surf,gray,samples=20,colormap/hot,
variable=\u, variable y=\v,
domain=-5:5, y domain=-9:9]
({u}, {v}, {4});
\addplot3[very thin,mark=,black] plot coordinates {
          (-5,-9,4)
          (-5,9,4)
          (5,9,4)
          (5,-9,4)
          (-5,-9,4)};
\addplot3[opacity=0.0, fill opacity=0.2, surf,gray,samples=20,colormap/hot,
variable=\u, variable y=\v,
domain=-5:5, y domain=-9:9]
({u}, {v}, {6});
\addplot3[very thin,mark=,black] plot coordinates {
          (-5,-9,6)
          (-5,9,6)
          (5,9,6)
          (5,-9,6)
          (-5,-9,6)};
\end{axis}
\end{tikzpicture}
\end{tabular}
\caption{The homeomorphism $\zeta$ for $\xi(x,y):=\sqrt{B^2-\min(y^2,B^2)}$ acting on $\R^3$.}\label{pic:homeo}
\end{center}
\end{figure}


For each $\varepsilon>0$ consider the open neighborhood 
$$
{\mathbb D}_A(\varepsilon):=\{x^2+y^2<(A+\varepsilon)^2\}\times(-\varepsilon,\varepsilon)\subset\R^3
$$ 
of ${\mathbb D}_A$. Clearly, $\Dd_{A,\xi}(\varepsilon):=\zeta^{-1}({\mathbb D}_A(\varepsilon))$ is an open neighborhood of $\Dd_{A,\xi}$ in $\R^3$. 

\begin{defi}\label{def1}\em
A (continuous) path $\alpha:[a,b]\rightarrow\R^3$ \emph{meets transversally once the warped disc $\Dd_{A,\xi}$} if there exist $s_0\in(a,b)$ and $\varepsilon>0$ such that $J:=\alpha^{-1}(\Dd_{A,\xi}(\varepsilon))=(s_0-\varepsilon,s_0+\varepsilon)$ is an open subinterval of $[a,b]$ and $(\zeta\circ\alpha)|_J(t)=(0,0,t-s_0)$.
\end{defi}

\begin{remark}\label{rem0}\em
If the path $\alpha:[a,b]\to\R^3$ meets transversally once the warped disc $\Dd_{A,\xi}$, then $\alpha([a,b])\cap\partial\Dd_{A,\xi}=\varnothing$.
\end{remark}

Let $C$ be a topological space homeomorphic to a closed disc and let $\phi:C\to\R^3$ be a continuous map. The restriction $\partial\phi:=\phi|_{\partial C}$ is called the \em boundary map \em of $\phi$. We say that the boundary map $\partial\phi$ \em meets transversally once \em a warped disc $\Dd_{A,\xi}\subset\R^3$ if there exists a parameterization $\beta$ of $\partial C$ such that $\alpha:=\phi\circ\beta$ meets transversally once the warped disc $\Dd_{A,\xi}$.

Given a path-connected topological space $X$ and a point $x_0\in X$ we denote the fundamental group of $X$ at the base point $x_0$ with $\pi_1(X,x_0)$. Each path $\alpha$ starting and ending at $x_0$ is called a loop with base point $x_0$ and represents an element of $\pi_1(X,x_0)$, that we denote with $[\alpha]$.

\begin{lem}\label{lem2}
Let $\Dd_{A,\xi}$ be a warped disc of $\R^3$ and let $X:=\R^3\setminus\partial\Dd_{A,\xi}$. Let $\alpha:[a,b]\to X$ be a loop with base point $x_0\in X$ that meets transversally once $\Dd_{A,\xi}$. Then $[\alpha]$ is a generator of $\pi_1(X,x_0)\cong{\mathbb Z}$.
\end{lem}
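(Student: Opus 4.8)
\emph{Proof proposal.} The plan is to transport the configuration to the model case of a flat round disc by means of the homeomorphism $\zeta$, to compute the fundamental group of the complement there, and finally to recognise $[\alpha]$ as a generator by straightening the part of $\alpha$ that lies away from its single transversal crossing. Concretely, $\zeta:\R^3\to\R^3$ is a homeomorphism mapping $\Dd_{A,\xi}$ onto the flat disc $D_0:={\mathbb D}_A\times\{0\}$, hence $\partial\Dd_{A,\xi}$ onto the round circle $C_0:=\{x^2+y^2=A^2,\,z=0\}$ and $\Dd_{A,\xi}(\varepsilon)$ onto ${\mathbb D}_A(\varepsilon)$. Thus $\zeta$ restricts to a homeomorphism $X\to X_0:=\R^3\setminus C_0$, inducing an isomorphism $\pi_1(X,x_0)\cong\pi_1(X_0,\zeta(x_0))$ which sends $[\alpha]$ to $[\alpha']$, where $\alpha':=\zeta\circ\alpha$; and by Definition \ref{def1} there are $s_0\in(a,b)$ and $\varepsilon>0$ with $(\alpha')^{-1}({\mathbb D}_A(\varepsilon))=(s_0-\varepsilon,s_0+\varepsilon)=:J$ and $\alpha'|_J(t)=(0,0,t-s_0)$. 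Hence it suffices to prove the statement for $C_0$, $D_0$ and $\alpha'$.

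To compute $\pi_1(X_0)$, take the open cover $X_0=U\cup V$ with $V:=\{x^2+y^2<A^2\}$ a convex open solid cylinder and $U:=\R^3\setminus D_0$. These are path connected, $U\cap V=\{x^2+y^2<A^2,\,z\neq0\}$ is the disjoint union of the two convex (hence simply connected) sets $W_\pm:=U\cap V\cap\{\pm z>0\}$, the set $V$ is contractible, and $U=\R^3\setminus D_0$ is simply connected: it deformation retracts onto the boundary $2$-sphere of a closed tubular neighbourhood of $D_0$, so $\R^3\setminus D_0\simeq\sph^2$. The version of the Seifert--van Kampen theorem that allows a disconnected intersection then gives $\pi_1(X_0)\cong\Z$, a generator being represented by the concatenation of an arbitrary path in $V$ from a point of $W_-$ to a point of $W_+$ with an arbitrary path in $U$ back to the starting point. (Equivalently one may just quote $\pi_1(\R^3\setminus C_0)\cong H_1(\R^3\setminus C_0)\cong\Z$, the isomorphism being the linking number with $C_0$, by Alexander duality.)

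It remains to identify $[\alpha']$. Put $q_\pm:=(0,0,\pm\varepsilon)$; since $\varepsilon\notin(-\varepsilon,\varepsilon)$ both points lie in $X_0$, in fact $q_-\in W_-$ and $q_+\in W_+$. Up to a change of base point, which is harmless for the property of being a generator, $\alpha'$ is the concatenation of the vertical segment $\sigma:u\mapsto(0,0,u)$, $u\in[-\varepsilon,\varepsilon]$, from $q_-$ to $q_+$, which lies in $V$ and crosses $D_0$ transversally exactly once, at the interior point $(0,0,0)$, followed by the path $\beta:=\alpha'|_{[s_0+\varepsilon,b]}\ast\alpha'|_{[a,s_0-\varepsilon]}$ from $q_+$ to $q_-$. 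By Definition \ref{def1} the image of $\beta$ avoids ${\mathbb D}_A(\varepsilon)$, and $\R^3\setminus{\mathbb D}_A(\varepsilon)$ is simply connected, being the complement of a bounded convex open set (so $\simeq\sph^2$); moreover ${\mathbb D}_A(\varepsilon)\supset D_0$, whence $\R^3\setminus{\mathbb D}_A(\varepsilon)\subset U$. Thus $\beta$ is homotopic, relative to its endpoints and inside $\R^3\setminus{\mathbb D}_A(\varepsilon)\subset U$, to any fixed ``exterior'' arc $\tau$ from $q_+$ to $q_-$ running outside the cylinder ${\mathbb D}_A(\varepsilon)$. Therefore $[\alpha']=[\sigma\ast\beta]=[\sigma\ast\tau]$, which is precisely the Seifert--van Kampen generator of $\pi_1(X_0)$ exhibited above (equivalently, $\sigma\ast\tau$ meets the Seifert disc $D_0$ of $C_0$ transversally once, so it has linking number $\pm1$ with $C_0$). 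Pulling back along $\zeta$, $[\alpha]$ generates $\pi_1(X,x_0)\cong\Z$.

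The step I expect to be the main obstacle is the last one: turning the purely local transversality data supplied by Definition \ref{def1} into information about the global homotopy class, i.e.\ making precise that ``$\alpha'$ crosses $D_0$ once and otherwise remains outside ${\mathbb D}_A(\varepsilon)$'' forces $[\alpha']$ to be a meridian. This requires combining the explicit description of the generator of $\pi_1(X_0)$ with the simple connectivity of $\R^3\setminus{\mathbb D}_A(\varepsilon)$, and keeping track of base points; the auxiliary fact that $\R^3\setminus D_0$ is simply connected is classical but should not be omitted.
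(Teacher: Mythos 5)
Your argument is correct, and its skeleton coincides with the paper's: conjugate by $\zeta$ to reduce to the flat disc ${\mathbb D}_A\times\{0\}$, split the loop into the transversal crossing segment plus the remainder, use simple connectivity of the complement of a convex set containing the disc to normalise the remainder to a standard exterior arc, and identify the resulting standard loop as a generator of $\pi_1(\R^3\setminus\partial{\mathbb D}_A)\cong\Z$. The genuine difference lies in the last step, which is precisely the point the paper treats most lightly: there, $\R^3\setminus\partial{\mathbb D}_A$ is deformation retracted onto $Z:=\partial{\mathbb D}_A(\varepsilon)\cup I_\varepsilon$ and the fact that the standard loop generates $\pi_1(Z)\cong\Z$ is left as ``an exercise of algebraic topology''; you instead compute $\pi_1(\R^3\setminus\partial{\mathbb D}_A)$ via Seifert--van Kampen for the cover $U:=\R^3\setminus({\mathbb D}_A\times\{0\})$, $V:=\{x^2+y^2<A^2\}$, whose intersection has two simply connected components, and this version of the theorem hands you the generator explicitly as (arc through $V$ from $W_-$ to $W_+$) followed by (arc through $U$ back), which is exactly the shape of your normalised loop $\sigma\ast\tau$. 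What your route buys is self-containedness: the ``exercise'' is replaced by a standard computation, and you make explicit the small facts the paper uses tacitly (the endpoints $(0,0,\pm\varepsilon)$ lie outside ${\mathbb D}_A(\varepsilon)$ and in the two components of $U\cap V$; the base-point change is harmless since the group is abelian; $\R^3\setminus({\mathbb D}_A\times\{0\})$ and $\R^3\setminus{\mathbb D}_A(\varepsilon)$ are simply connected). The cost is that you must invoke either the groupoid version of van Kampen or Alexander duality, whereas the paper's retraction argument, once the exercise is done, stays entirely elementary. A minor difference of no consequence: the paper trivialises the exterior loop $\gamma^{-1}\ast\beta_1$ inside $\R^3\setminus{\mathbb D}_A$, while you homotope the exterior arc rel endpoints inside the smaller set $\R^3\setminus{\mathbb D}_A(\varepsilon)$; both are legitimate since each set is simply connected and contains the relevant paths.
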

\begin{proof} 
Keep the notations introduced above. Let $s_0\in(a,b)$ and $\varepsilon>0$ be such that 
$$
J:=\alpha^{-1}(\Dd_{A,\xi}(\varepsilon))=(s_0-\varepsilon,s_0+\varepsilon)
$$ 
is an open subinterval of $[a,b]$ and $(\zeta\circ\alpha)|_J(t)=(0,0,t-s_0)$. After a reparameterization of $\alpha$ we may assume $s_0=0$.

As $\zeta$ is a homeomorphism of $\R^3$, we will prove the statement for $\beta:=\zeta\circ\alpha$, $Y:=\R^3\setminus\partial{\mathbb D}_A$ and the base point $y_0:=\beta(-\varepsilon)=(0,0,-\varepsilon)$. Consider the path $\gamma:[0,1]\rightarrow\R^3$ given by
$$
\gamma(t):=
\begin{cases}
(3(A+\varepsilon)t,0,\varepsilon)&\text{if $0\le t\le\frac{1}{3}$,}\\
(A+\varepsilon, 0, \varepsilon-(t-\frac{1}{3})6\varepsilon)&\text{if $\frac{1}{3}<t\le \frac{2}{3}$,}\\
(A+\varepsilon-3(A+\varepsilon)(t-\frac{2}{3}), 0, -\varepsilon)&\text{if $\frac{2}{3}<t\le 1$}.
\end{cases}
$$

\begin{figure}[ht!]
\begin{center}
\begin{tikzpicture}[scale=0.8,line join=round,decoration={
    markings,
    mark=at position 0.5 with {\arrow[black!80,opacity=0.6]{>}}}]
\tikzstyle{estiloejes} = [line width=1pt]
\tikzstyle{estilodiscos} = [shading=radial]
\tikzstyle{estiloborde1} = [line width=2pt, draw=gray!70, rounded corners=8pt]
\tikzstyle{estiloborde2} = [line width=2pt, decorate]
\tikzstyle{estiloborde} = [line width=2pt]
\draw[estiloejes](-1.164,1.416)--(-1.947,2.368);
\filldraw[fill=gray!50,fill opacity=0.4,draw=none](-.677,.688)--(-.677,2.214)--(-.203,2.248)--(-.203,.722)--cycle;
\filldraw[fill=gray!50,fill opacity=0.4,draw=none](-.203,.722)--(-.203,2.248)--(.277,2.245)--(.277,.719)--cycle;
\draw[estiloejes](0,-3.816)--(0,3.8);
\filldraw[fill=gray!50,fill opacity=0.4,draw=black, dashed](-2.839,-1.316)--(-2.982,-1.093)--(-3.051,-.862)--(-3.045,-.628)--(-2.964,-.398)--(-2.81,-.177)--(-2.587,.03)--(-2.301,.217)--(-1.957,.38)--(-1.566,.515)--(-1.136,.618)--(-.677,.688)--(-.203,.722)--(.277,.719)--(.75,.679)--(1.204,.605)--(1.629,.496)--(2.014,.356)--(2.349,.189)--(2.626,-.001)--(2.839,-.211)--(2.982,-.433)--(3.051,-.665)--(3.045,-.898)--(2.964,-1.128)--(2.81,-1.349)--(2.587,-1.556)--(2.301,-1.743)--(1.957,-1.906)--(1.566,-2.041)--(1.136,-2.145)--(.677,-2.214)--(.203,-2.248)--(-.277,-2.245)--(-.75,-2.206)--(-1.204,-2.131)--(-1.629,-2.022)--(-2.014,-1.883)--(-2.349,-1.716)--(-2.626,-1.525)--cycle;
\filldraw[fill=gray!50,fill opacity=0.4,draw=none](.277,.719)--(.277,2.245)--(.75,2.206)--(.75,.679)--cycle;
\filldraw[fill=gray!50,fill opacity=0.4,draw=none](-1.136,.618)--(-1.136,2.145)--(-.677,2.214)--(-.677,.688)--cycle;
\filldraw[fill=gray!50,fill opacity=0.4,draw=none](.75,.679)--(.75,2.206)--(1.204,2.131)--(1.204,.605)--cycle;
\draw[estiloejes](-1.136,1.382)--(-1.164,1.416);
\filldraw[fill=gray!50,fill opacity=0.4,draw=none](-1.566,.515)--(-1.566,2.041)--(-1.136,2.145)--(-1.136,.618)--cycle;
\filldraw[fill=gray!50,fill opacity=0.4,draw=none](1.204,.605)--(1.204,2.131)--(1.629,2.022)--(1.629,.496)--cycle;
\filldraw[fill=gray!50,fill opacity=0.4,draw=none](-1.957,.38)--(-1.957,1.906)--(-1.566,2.041)--(-1.566,.515)--cycle;
\filldraw[fill=gray!50,fill opacity=0.4,draw=none](1.629,.496)--(1.629,2.022)--(2.014,1.883)--(2.014,.356)--cycle;
\draw[estiloejes](1.136,-1.382)--(-1.136,1.382);
\filldraw[fill=gray!50,fill opacity=0.4,draw=none](-2.301,.217)--(-2.301,1.743)--(-1.957,1.906)--(-1.957,.38)--cycle;
\filldraw[fill=gray!50,fill opacity=0.4,draw=none](2.014,.356)--(2.014,1.883)--(2.349,1.716)--(2.349,.189)--cycle;
\filldraw[fill=gray!50,fill opacity=0.4,draw=none](-2.587,.03)--(-2.587,1.556)--(-2.301,1.743)--(-2.301,.217)--cycle;
\filldraw[fill=gray!50,fill opacity=0.4,draw=none](2.349,.189)--(2.349,1.716)--(2.626,1.525)--(2.626,-.001)--cycle;
\filldraw[fill=gray!50,fill opacity=0.4,draw=none](-2.81,-.177)--(-2.81,1.349)--(-2.587,1.556)--(-2.587,.03)--cycle;
\filldraw[fill=gray!50,fill opacity=0.4,draw=none](2.626,-.001)--(2.626,1.525)--(2.839,1.316)--(2.839,-.211)--cycle;
\filldraw[fill=gray!50,fill opacity=0.4,draw=none](-3.045,-.628)--(-3.045,.898)--(-2.964,1.128)--(-2.964,-.398)--cycle;
\filldraw[fill=gray!50,fill opacity=0.4,draw=none](-3.051,-.862)--(-3.051,.665)--(-3.045,.898)--(-3.045,-.628)--cycle;
\draw[estiloejes](2.839,.553)--(-2.839,-.553);
\draw[estiloejes](0,-.763)--(0,.038);
\filldraw[fill=gray!50,fill opacity=0.4,draw=black,dashed](-2.626,.001)--(-2.349,-.189)--(-2.014,-.356)--(-1.629,-.496)--(-1.204,-.605)--(-.75,-.679)--(-.277,-.719)--(.203,-.722)--(.677,-.688)--(1.136,-.618)--(1.566,-.515)--(1.957,-.38)--(2.301,-.217)--(2.587,-.03)--(2.81,.177)--(2.964,.398)--(3.045,.628)--(3.051,.862)--(2.982,1.093)--(2.839,1.316)--(2.626,1.525)--(2.349,1.716)--(2.014,1.883)--(1.629,2.022)--(1.204,2.131)--(.75,2.206)--(.277,2.245)--(-.203,2.248)--(-.677,2.214)--(-1.136,2.145)--(-1.566,2.041)--(-1.957,1.906)--(-2.301,1.743)--(-2.587,1.556)--(-2.81,1.349)--(-2.964,1.128)--(-3.045,.898)--(-3.051,.665)--(-2.982,.433)--(-2.839,.211)--cycle;
\draw[estiloborde2](-2.839,-.514)--(-2.839,-1.316)--(-.081,-.779);
\draw[estiloborde](-2.839,-.514)--(-2.839,-1.316)--(-.081,-.779);
\draw[estiloborde1, opacity=1](-3.9,-.789)--(-3.893,-1.521)--(-3.731,-2.253)--(-3.244,-2.921)--(-2.433,-2.763)--(-1.622,-3.368)--(-.811,-3.211)--(0,-3.053)--(0,-1.526)--(0,-.763)--(0,.038)--(0,.763)--(0,1.526)--(-.811,2.132)--(-2.433,1.816)--(-3.244,2.421)--(-4.056,1.5)--(-4.056,.737)--cycle;
\filldraw[fill opacity=0.5,line width=1pt,estilodiscos](-.973,1.222)--(-1.342,1.134)--(-1.678,1.018)--(-1.972,.878)--(-2.218,.718)--(-2.409,.541)--(-2.541,.351)--(-2.61,.154)--(-2.615,-.046)--(-2.556,-.244)--(-2.433,-.436)--(-2.251,-.615)--(-2.013,-.778)--(-1.726,-.922)--(-1.396,-1.041)--(-1.032,-1.134)--(-.643,-1.198)--(-.237,-1.232)--(.174,-1.234)--(.581,-1.206)--(.973,-1.146)--(1.342,-1.057)--(1.678,-.942)--(1.972,-.802)--(2.218,-.641)--(2.409,-.464)--(2.541,-.275)--(2.61,-.077)--(2.615,.123)--(2.556,.321)--(2.433,.512)--(2.251,.691)--(2.013,.855)--(1.726,.998)--(1.396,1.117)--(1.032,1.21)--(.643,1.275)--(.237,1.308)--(-.174,1.311)--(-.581,1.282)--cycle;
\draw[estiloejes,->](0,0)--(0,3.8);
\draw[estiloborde1,opacity=0.9](-3.9,-.789)--(-3.893,-1.521)--(-3.731,-2.253)--(-3.244,-2.921)--(-2.433,-2.763)--(-1.622,-3.368)--(-.811,-3.211)--(0,-3.053)--(0,-1.526)(0,.038)--(0,.763)--(0,1.526)--(-.811,2.132)--(-2.433,1.816)--(-3.244,2.421)--(-4.056,1.5)--(-4.056,.737);
\draw[estiloborde1,decorate]
(-4.056,1.)--(-4.056,-0.5);
\draw[estiloborde1,decorate]
(-1.622,-3.368)--(-.811,-3.211);
\filldraw[fill=gray!50,fill opacity=0.4,draw=none](-2.964,-.398)--(-2.964,1.128)--(-2.81,1.349)--(-2.81,-.177)--cycle;
\draw[estiloejes](3.319,.646)--(2.911,.567);
\draw[estiloejes](2.911,.567)--(2.839,.553);
\filldraw[fill=gray!50,fill opacity=0.4,draw=none](2.839,-.211)--(2.839,1.316)--(2.982,1.093)--(2.982,-.433)--cycle;
\filldraw[fill=gray!50,fill opacity=0.4,draw=none](2.982,-.433)--(2.982,1.093)--(3.051,.862)--(3.051,-.665)--cycle;
\draw[estiloborde](-.081,.747)--(-2.839,.211)--(-2.839,-.514);
\draw[estiloborde2](-.081,.747)--(-2.839,.211)--(-2.839,-.514);
\draw[fill=black, draw=none, fill opacity=0.5] (0,.747) circle (0.1cm);
\draw[fill=black, draw=none, fill opacity=0.3] (0,-.77) circle (0.1cm);
\draw[estiloejes](4.867,.947)--(3.319,.646);
\filldraw[fill=gray!50,fill opacity=0.4,draw=none](3.051,-.665)--(3.051,.862)--(3.045,.628)--(3.045,-.898)--cycle;
\filldraw[fill=gray!50,fill opacity=0.4,draw=none](-2.982,-1.093)--(-2.982,.433)--(-3.051,.665)--(-3.051,-.862)--cycle;
\filldraw[fill=gray!50,fill opacity=0.4,draw=none](3.045,-.898)--(3.045,.628)--(2.964,.398)--(2.964,-1.128)--cycle;
\filldraw[fill=gray!50,fill opacity=0.4,draw=none](-2.839,-1.316)--(-2.839,.211)--(-2.982,.433)--(-2.982,-1.093)--cycle;
\filldraw[fill=gray!50,fill opacity=0.4,draw=none](2.964,-1.128)--(2.964,.398)--(2.81,.177)--(2.81,-1.349)--cycle;
\filldraw[fill=gray!50,fill opacity=0.4,draw=none](-2.626,-1.525)--(-2.626,.001)--(-2.839,.211)--(-2.839,-1.316)--cycle;
\filldraw[fill=gray!50,fill opacity=0.4,draw=none](2.81,-1.349)--(2.81,.177)--(2.587,-.03)--(2.587,-1.556)--cycle;
\filldraw[fill=gray!50,fill opacity=0.4,draw=none](-2.349,-1.716)--(-2.349,-.189)--(-2.626,.001)--(-2.626,-1.525)--cycle;
\draw[estiloejes](-2.839,-.553)--(-2.911,-.567);
\filldraw[fill=gray!50,fill opacity=0.4,draw=none](2.587,-1.556)--(2.587,-.03)--(2.301,-.217)--(2.301,-1.743)--cycle;
\draw[estiloejes](-2.911,-.567)--(-3.063,-.596);
\draw[estiloejes](-3.063,-.596)--(-3.319,-.646);
\draw[estiloejes,->](-3.319,-.646)--(-4.867,-.947);
\filldraw[fill=gray!50,fill opacity=0.4,draw=none](-2.014,-1.883)--(-2.014,-.356)--(-2.349,-.189)--(-2.349,-1.716)--cycle;
\filldraw[fill=gray!50,fill opacity=0.4,draw=none](2.301,-1.743)--(2.301,-.217)--(1.957,-.38)--(1.957,-1.906)--cycle;
\filldraw[fill=gray!50,fill opacity=0.4,draw=none](-1.629,-2.022)--(-1.629,-.496)--(-2.014,-.356)--(-2.014,-1.883)--cycle;
\filldraw[fill=gray!50,fill opacity=0.4,draw=none](1.957,-1.906)--(1.957,-.38)--(1.566,-.515)--(1.566,-2.041)--cycle;
\filldraw[fill=gray!50,fill opacity=0.4,draw=none](-1.204,-2.131)--(-1.204,-.605)--(-1.629,-.496)--(-1.629,-2.022)--cycle;
\filldraw[fill=gray!50,fill opacity=0.4,draw=none](1.566,-2.041)--(1.566,-.515)--(1.136,-.618)--(1.136,-2.145)--cycle;
\filldraw[fill=gray!50,fill opacity=0.4,draw=none](-.75,-2.206)--(-.75,-.679)--(-1.204,-.605)--(-1.204,-2.131)--cycle;
\filldraw[fill=gray!50,fill opacity=0.4,draw=none](1.136,-2.145)--(1.136,-.618)--(.677,-.688)--(.677,-2.214)--cycle;
\filldraw[fill=gray!50,fill opacity=0.4,draw=none](-.277,-2.245)--(-.277,-.719)--(-.75,-.679)--(-.75,-2.206)--cycle;
\filldraw[fill=gray!50,fill opacity=0.4,draw=none](.677,-2.214)--(.677,-.688)--(.203,-.722)--(.203,-2.248)--cycle;
\filldraw[fill=gray!50,fill opacity=0.4,draw=none](.203,-2.248)--(.203,-.722)--(-.277,-.719)--(-.277,-2.245)--cycle;
\draw[estiloejes](1.164,-1.416)--(1.136,-1.382);
\draw[estiloejes,<-](1.947,-2.368)--(1.164,-1.416);
\path (2.271,-2.763) node[left]{$y$} (-5.5,-1.2) node[above right]{$x$} (0,4.4) node[below]{$z$};\path (.811,-.2) node[right]{${\mathbb D}_A$} (0,-0.95) node[right]{$y_0$};
\path (2.5,1.468) node[above right]{${\mathbb D}_A(\epsilon)$};
\path (-4.3,-1.5) node[below]{$\beta$};
\path (-1.1,.526) node[below]{$\gamma$};
\end{tikzpicture}
\caption{The path $\beta$ meets transversally once the disk ${\mathbb D}_A$.}\label{pic:disco}
\end{center}
\end{figure}
Write $\beta_0:=\beta|_{J}$ and $\beta_1:=\beta|_{[\varepsilon,b]}\ast\beta|_{[a,-\varepsilon]}$. We claim:\em
$$
\quad[\beta]=[\beta_0\ast\beta_1]=[\beta_0\ast\gamma]\,\cdot\,[\gamma^{-1}\ast\beta_1]=g\,\cdot\,e=g,
$$
where $e$ and $g$ are respectively the identity element and a generator of $\pi_1(Y,y_0)\cong{\mathbb Z}$\em.

The loop $\gamma^{-1}\ast\beta_1$ with base point $y_0$ is contained in $\R^3\setminus{\mathbb D}_A$, which is a simply connected space. Consequently, $[\gamma^{-1}*\beta_1]=e$ in $\pi_1(Y,y_0)$. 

The class $[\beta_0\ast\gamma]$ generates $\pi_1(Y,y_0)$. Indeed, $Y$ has as deformation retract the set $Z:=\partial{\mathbb D}_A(\varepsilon)\cup I_\varepsilon$ where $I_\varepsilon:=\{(0,0)\}\times\{-\varepsilon\le z\le \varepsilon\}$. It is an exercise of algebraic topology to show that $[\beta_0\ast\gamma]$ is a generator of $\pi_1(Z,y_0)\cong\pi_1(Y,y_0)\cong{\mathbb Z}$, as required.\qed 
\end{proof}

\begin{lem}\label{lem3}
Let $\phi:C\rightarrow X$ be a continuous map and assume that $C$ is homeomorphic to a closed disc. Let $\beta:[a,b]\to\partial C$ be a parameterization starting and ending at $z_0\in\partial C$. Then $[\phi\circ\beta]$ is the identity element of $\pi_1(X,\phi(z_0))$.
\end{lem}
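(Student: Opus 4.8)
The plan is to use that $C$, being homeomorphic to a closed disc, is contractible (in particular simply connected), so that the loop $\beta$, viewed through the inclusion $\partial C\hookrightarrow C$ as a loop in $C$ based at $z_0$, is nullhomotopic rel basepoint; composing that nullhomotopy with $\phi$ then exhibits $\phi\circ\beta$ as nullhomotopic rel basepoint in $X$, which is exactly the assertion $[\phi\circ\beta]=e$ in $\pi_1(X,\phi(z_0))$.

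To make this explicit I would fix a homeomorphism $h\colon{\mathbb D}_1\to C$, where ${\mathbb D}_1$ denotes the standard closed unit disc of $\R^2$, and put $w_0:=h^{-1}(z_0)\in{\mathbb D}_1$. Since ${\mathbb D}_1$ is convex, the formula $H(t,s):=h\big((1-s)\,h^{-1}(\beta(t))+s\,w_0\big)$ defines a continuous map $H\colon[a,b]\times[0,1]\to C$ (note $h^{-1}(\beta(t))\in{\mathbb D}_1$ because $\beta(t)\in\partial C\subset C$, and the segment joining it to $w_0$ stays in ${\mathbb D}_1$). One checks directly that $H(\cdot,0)=\beta$, that $H(\cdot,1)$ is the constant loop at $z_0$, and that $H(a,s)=H(b,s)=z_0$ for every $s\in[0,1]$; hence $\beta$, as a loop in $C$ at $z_0$, is nullhomotopic rel basepoint.

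Finally I would push this forward: the composite $\phi\circ H\colon[a,b]\times[0,1]\to X$ is continuous, equals $\phi\circ\beta$ at $s=0$, equals the constant loop $\phi(z_0)$ at $s=1$, and fixes both endpoints at $\phi(z_0)$ throughout. Therefore $\phi\circ\beta$ is nullhomotopic rel basepoint in $X$ and $[\phi\circ\beta]$ is the identity element of $\pi_1(X,\phi(z_0))$. Equivalently, one may phrase this as: $\pi_1(C,z_0)$ is trivial because $C$ is a disc, and the induced homomorphism $\phi_*\colon\pi_1(C,z_0)\to\pi_1(X,\phi(z_0))$ carries the class of $\beta$ to $[\phi\circ\beta]$, hence to $e$.

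I do not expect any real obstacle in this argument; the only subtlety worth flagging is that one wants a homotopy keeping the basepoint fixed, which is why the contraction is performed toward $w_0=h^{-1}(z_0)$ rather than toward the centre of ${\mathbb D}_1$ — this uses the convexity of the disc and makes $H$ a based homotopy on the nose, so that no appeal to ``a freely nullhomotopic loop is nullhomotopic'' is needed.
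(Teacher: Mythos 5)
Your proof is correct and is essentially identical to the paper's: both transport the loop to the standard disc via a homeomorphism, contract it along straight lines to the image of the basepoint (using convexity), and push the resulting based homotopy forward by $\phi$. No issues.
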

\begin{proof} 
Let $\psi:C\to\{x^2+y^2\leq 1\}$ be a homeomorphism. The continuous map 
$$
H:[0,1]\times[a,b]\to X,\ (\rho,t)\mapsto(\phi\circ\psi^{-1})(\rho\cdot(\psi\circ\beta)(t)+(1-\rho)\cdot\psi(z_0)))
$$
is a homotopy map between $\phi\circ\beta$ and the constant path, as required.\qed
\end{proof}

\begin{prop}\label{prop:top}
Let $C$ be a topological space homeomorphic to a closed disc and $\phi:C\to\R^3$ a continuous map. Assume $\partial\phi:\partial C\to\R^3$ meets transversally once a warped disc $\Dd\subset\R^3$. Then $\partial\Dd\cap\phi(\Int(C))\neq\varnothing$.
\end{prop}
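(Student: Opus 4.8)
The plan is to argue by contradiction: assume $\partial\Dd\cap\phi(\Int(C))=\varnothing$ and derive a contradiction by computing the class of the boundary loop in $\pi_1(\R^3\setminus\partial\Dd)$ in two incompatible ways, using Lemma \ref{lem2} and Lemma \ref{lem3}.

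First I would unwind the hypothesis on $\partial\phi$. Since $\partial\phi$ meets transversally once $\Dd$, there is a parameterization $\beta:[a,b]\to\partial C$, a loop based at some point $z_0\in\partial C$, such that $\alpha:=\phi\circ\beta$ meets transversally once $\Dd$ in the sense of Definition \ref{def1}. By Remark \ref{rem0} this already forces $\alpha([a,b])\cap\partial\Dd=\varnothing$, i.e.\ $\phi(\partial C)$ is disjoint from $\partial\Dd$. Combining this with the contradiction hypothesis $\phi(\Int(C))\cap\partial\Dd=\varnothing$, I obtain $\phi(C)\cap\partial\Dd=\varnothing$, so that $\phi$ is a continuous map from the disc $C$ into $X:=\R^3\setminus\partial\Dd$.

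Now comes the key comparison. Put $x_0:=\phi(z_0)\in X$. On one hand, $\alpha:[a,b]\to X$ is a loop based at $x_0$ that meets transversally once $\Dd$, so Lemma \ref{lem2} says $[\alpha]$ is a generator of $\pi_1(X,x_0)\cong\Z$; in particular $[\alpha]$ is not the identity element. On the other hand, $C$ is homeomorphic to a closed disc and $\beta$ parameterizes $\partial C$ starting and ending at $z_0$, and we have just seen that $\phi$ maps $C$ into $X$; hence Lemma \ref{lem3} gives that $[\alpha]=[\phi\circ\beta]$ is the identity element of $\pi_1(X,x_0)$. These two conclusions contradict each other, which proves $\partial\Dd\cap\phi(\Int(C))\neq\varnothing$.

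I do not anticipate a real obstacle here, since all the topological work is already carried out in Lemmas \ref{lem2} and \ref{lem3}; the proof is essentially bookkeeping. The one point requiring care — and the only place where the argument could slip — is making sure the \emph{same} parameterization $\beta$ (and hence the same base points $z_0$ and $x_0=\phi(z_0)$) is fed into both lemmas, and invoking Remark \ref{rem0} at the right moment so that $\phi(C)$, and not merely $\phi(\Int(C))$, is seen to avoid $\partial\Dd$; this is what legitimately turns $\phi$ into a map $C\to X$ to which Lemma \ref{lem3} applies.
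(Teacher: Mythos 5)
Your proposal is correct and follows exactly the same route as the paper's proof: contradiction hypothesis, Remark~\ref{rem0} to see that $\phi(C)$ lands in $X=\R^3\setminus\partial\Dd$, then the clash between Lemma~\ref{lem2} (generator) and Lemma~\ref{lem3} (identity) applied to the same loop $\phi\circ\beta$. No gaps; the care you flag about using one fixed parameterization and base point is precisely the right bookkeeping.
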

\begin{proof}
Assume by contradiction $\partial\Dd\cap\phi(\Int(C))=\varnothing$. As $\partial\phi$ meets transversally once $\Dd$, the image $\phi(\partial C)$ does not intersect $\partial\Dd$ by Remark~\ref{rem0}. Thus, $\phi(C)\subset X:=\R^3\setminus\partial\Dd$. Let $\beta:[a,b]\to\partial C$ be a parameterization starting and ending at $z_0\in\partial C$ such that $\phi\circ\beta$ meets transversally once $\Dd$. By Lemma~\ref{lem3} the class $[\phi\circ\beta]$ is the identity element of $\pi_1(X,\phi(z_0))$. However, by Lemma~\ref{lem2} the class $[\phi\circ\beta]$ is a generator of $\pi_1(X,\phi(z_0))\cong{\mathbb Z}$, which is a contradiction. Consequently, $\partial\Dd\cap\phi(\Int(C))\neq\varnothing$, as required.\qed
\end{proof}

\section{Proof of Theorem \ref{main}}\label{s3}

Observe first that the map $f$ in the statement of Theorem \ref{main} is the composition $f_2\circ f_1$ of the polynomial maps
\begin{align*}
&f_1:\R^2\to\R^2,\quad (x,y)\mapsto(x^2,y^2),\\
&f_2:\R^2\to\R^2,\quad (x,y)\mapsto((xy^2+x^2y-y-1)^2+x^3y^2,(x^3y+xy-x-1)^2+x^3y^2).
\end{align*}
As $f_1(\R^2)$ is the closed quadrant $\overline{\Qq}:=\{x\geq0,y\geq0\}$, we have to prove the equality
\begin{equation}
f_2(\overline{\Qq})=\Qq.
\end{equation}
The inclusion $f_2(\overline{\Qq})\subset\Qq$ is straightforward because both components of $f_2$ are strictly positive on $\overline{\Qq}$. It only remains to show the inclusion 
\begin{equation}\label{principal}
\Qq\subset f_2(\overline{\Qq}).
\end{equation}

\subsection{Reduction of the proof of inclusion~\em\eqref{principal}}
Consider the (continuous) semialgebraic maps 
\begin{align*}
&g:\overline{\Qq}\rightarrow\R^3,\ (x,y)\mapsto(xy^2+x^2y-y-1,x^{3/2}y,x^3y+xy-x-1)\\
&h:\R^3\rightarrow\R^2,\ (x,y,z)\mapsto(x^2+y^2,y^2+z^2). 
\end{align*}
As $f_2=h\circ g$, we have to show that for each tuple $(A^2,B^2)\in\Qq$ there exists $(x_0,y_0)\in\overline{\Qq}$ such that $(h\circ g)(x_0,y_0)=(A^2,B^2)$. This is equivalent to check that the intersection $h^{-1}(\{(A^2,B^2)\})\cap g(\overline{\Qq})$ is non-empty. 

Denote $\Ss:=g(\overline{\Qq})$ and fix values $B\ge A>0$. It holds that sets
\begin{align*}
&h^{-1}(\{(A^2,B^2)\})=\{x^2+y^2=A^2,y^2+z^2=B^2\},\\
&h^{-1}(\{(B^2,A^2)\})=\{y^2+z^2=A^2,x^2+y^2=B^2\}
\end{align*} 
contain respectively the boundaries of the warped discs
\begin{align}
\Dd_1:\,z=\xi_1(x,y),\quad x^2+y^2\le A^2,\label{d1}\\
\Dd_2:\,x=\xi_2(y,z),\quad y^2+z^2\le A^2,\label{d2}
\end{align}
for the (continuous) semialgebraic functions 
\begin{align}
&\xi_1:\R^2\to\R,\ (x,y)\mapsto\sqrt{B^2-\min\{y^2,B^2\}},\label{xi1}\\
&\xi_2:\R^2\to\R,\ (y,z)\mapsto\sqrt{B^2-\min\{y^2,B^2\}}.\label{xi2}
\end{align}

Consequently, we are reduced to prove:

\paragraph{}\hspace{-6mm}\label{311} {\em For fixed values $B\ge A>0$ the intersections $\partial\Dd_1\cap\Ss$ and $\partial\Dd_2\cap\Ss$ are non-empty.}

\subsection{Proof of Statement \ref{311}}

Write $\Rr:=[0,+\infty)\times(0,\tfrac{\text{\textpi}}{2})$ and $\overline{\Rr}:=[0,+\infty)\times[0,\tfrac{\text{\textpi}}{2}]$. Consider the map $\phi:=(\phi_1,\phi_2,\phi_3):\R^2\to\R^3$ where\begin{align*}
&\phi_1(\rho,\theta):=\cos\theta\sin\theta(\cos\theta-
\sin\theta)^2\\
&\hspace{3.2cm}+\rho(2\cos^4\theta\sin\theta+\cos\theta\sin^4\theta+\cos^5\theta)+\rho^2\cos^5\theta\sin\theta,\\
&\phi_2(\rho,\theta):=\sqrt{\cos\theta\sin\theta}(\cos\theta+\sin\theta+
\rho\cos\theta\sin\theta),\\
&\phi_3(\rho,\theta):=\rho\sin\theta.
\end{align*}
Let us prove now some properties of the map $\phi$ and the sets $\Rr$ and $\overline{\Rr}$:\setcounter{paragraph}{0}

\paragraph{}\label{lem:uno}\hspace{-6mm} $\phi(\Rr)\subset\Ss$.

\vspace{2mm}
\noindent\begin{proof}
The analytic map 
$$
\psi:\Rr\to\Qq,\quad (\rho,\theta)\mapsto \left(\frac{\sin\theta}{\cos\theta},\frac{(\cos\theta+\sin\theta+\rho\cos\theta
\sin\theta)\cos^2\theta}{\sin\theta}\right),
$$
satisfies $\psi(\Rr)\subset\overline{\Qq}$ and $g\circ\psi=\phi|_{\Rr}$. Consequently, $\phi(\Rr)\subset\Ss$, as required.\qed
\end{proof}

\paragraph{}\hspace{-5mm}\label{lem:tres}
\em The inequality {$\phi_1^2(\rho,\theta)+\phi_3^2(\rho,\theta)\ge\frac{\rho^2}{4}$ holds for each $(\rho,\theta)\in\overline{\Rr}$.} Consequently,
\begin{equation}
\dist(\phi(\rho,\theta),{\bf0})\geq\frac{\rho}{2}\label{eq:1}
\end{equation}
for each $(\rho,\theta)\in\overline{\Rr}$. \em

\vspace{2mm}
\noindent\begin{proof}
As $\rho$, $\cos\theta$, $\sin\theta$ are $\geq0$ on $\overline{\Rr}$, we have
\begin{equation*}
\begin{split}
\phi_1(\rho,\theta)\ge\,&\rho\cos\theta(\cos^4\theta+\sin^4\theta)=\rho\cos\theta(1-2\cos^2\theta\sin^2\theta)\\
=\,&\rho\cos\theta\left(1-\frac{\sin^2(2\theta)}{2}\right)
\ge \frac{\rho}{2}\cos\theta.
\end{split}
\end{equation*}
In addition, $\phi_3(\rho,\theta)=\rho\sin\theta\ge\frac{\rho}{2}\sin\theta$, so
$$
\phi_1^2(\rho,\theta)+\phi_3^2(\rho,\theta)\ge \frac{\rho^2}{4}\cos^2\theta+\frac{\rho^2}{4}\sin^2\theta=\frac{\rho^2}{4},
$$
as required.\qed
\end{proof}

\paragraph{}\hspace{-6mm}\label{lem:dos0} The map $\phi$ satisfies $\phi(0,\theta)=\phi(0,\tfrac{\text{\textpi}}{2}-\theta)$ for $\theta\in[0,\tfrac{\text{\textpi}}{2}]$. Fix $M>0$ and consider the rectangle $\overline{\Rr}_M:=[0,M]\times[0,\tfrac{\text{\textpi}}{2}]$. Denote $\phi_M:=\phi|_{\overline{\Rr}_M}$. Identify the points $(0,\theta)$ and $(0,\frac{\text{\textpi}}{2}-\theta)$ for $\theta\in[0,\tfrac{\text{\textpi}}{2}]$ and endow the quotient space $\tilde{\overline{\Rr}}_M$ with the quotient topology. Observe that the interior $\Int(\tilde{\overline{\Rr}}_M)$ of $\tilde{\overline{\Rr}}_M$ as a topological manifold with boundary is the quotient space $\tilde{\Rr}_M$ obtained identifying the points $(0,\theta)$ and $(0,\frac{\text{\textpi}}{2}-\theta)$ of $\Rr_M:=[0,M)\times(0,\tfrac{\text{\textpi}}{2})$, where $\theta\in(0,\tfrac{\text{\textpi}}{2})$.


\begin{figure}[!ht]
\begin{center}
\begin{tabular}{cc}
\hskip 1.5cm\resizebox{6.2cm}{5cm}{\begin{tikzpicture}[trim left=0cm]
\pgfplotsset{ticks=none}
\begin{axis}[point meta min=-4, point meta max=3,view={-45}{30}, scale=3, axis lines=middle, footnotesize,xlabel=$$,ylabel=$$,
xmin=-0.5,xmax=2.5,ymin=-0.3,ymax=1.5,zmin=-0.5,zmax=2.5,
unit vector ratio=]
\addplot3[opacity=1, fill opacity=1, surf,samples=20,colormap/blackwhite,
variable=\u, variable y=\v,
domain=0:2, y domain=0:90]
({min(cos(v)*sin(v)*(sin(v)-cos(v))^2+
u*(2*(cos(v))^4*sin(v)+cos(v)*(sin(v))^4+(cos(v))^5)
+u^2*(cos(v))^5*sin(v),10)}, {min(sqrt(sin(v)*cos(v))*(sin(v)+cos(v)
+u*cos(v)*sin(v)),10)}, {min(u*sin(v),10)});
\addplot3[opacity=0.0, fill opacity=0.0,surf,gray,z buffer=sort,samples=40,
variable=\u, variable y=\v,
domain=-1.5:2.5, y domain=0:360]
({3*cos(v)}, {3*sin(v)}, {u});
\addplot3[opacity=1, fill opacity=0.0,domain=0:90,samples=40,samples y=0]
({min(cos(x)*sin(x)*(sin(x)-cos(x))^2,2)}, {min(sqrt(sin(x)*cos(x))*(sin(x)+cos(x)),2)}, {0});
\end{axis}
\end{tikzpicture}}& \hskip 0.2cm
\resizebox{7.6cm}{5cm}{
\begin{tikzpicture}
\pgfplotsset{ticks=none}
\begin{axis}[point meta min=-4, point meta max=3,view={45}{30}, scale=3.5, axis lines=middle, footnotesize,xlabel=$$,ylabel=$$,
xmin=-0.5,xmax=3.5,ymin=-0.3,ymax=1.5,zmin=-0.5,zmax=2.5,
unit vector ratio=]
\addplot3[opacity=1, fill opacity=1, surf,samples=20,colormap/blackwhite,
variable=\u, variable y=\v,
domain=0:2, y domain=0:90]
({min(cos(v)*sin(v)*(sin(v)-cos(v))^2+
u*(2*(cos(v))^4*sin(v)+cos(v)*(sin(v))^4+(cos(v))^5)
+u^2*(cos(v))^5*sin(v),10)}, {min(sqrt(sin(v)*cos(v))*(sin(v)+cos(v)
+u*cos(v)*sin(v)),10)}, {min(u*sin(v),10)});
\addplot3[opacity=0.0, fill opacity=0.0,surf,gray,z buffer=sort,samples=40,
variable=\u, variable y=\v,
domain=-1.5:2.5, y domain=0:360]
({3*cos(v)}, {3*sin(v)}, {u});
\addplot3[opacity=1, fill opacity=0.0,domain=0:90,samples=40,samples y=0]
({min(cos(x)*sin(x)*(sin(x)-cos(x))^2,2)}, {min(sqrt(sin(x)*cos(x))*(sin(x)+cos(x)),2)}, {0});
\end{axis}
\end{tikzpicture}
}
\end{tabular}
\caption{Left and right views of $\phi_M(\Rr_M)\subset\Ss$.}\label{pic:surface1}
\end{center}
\end{figure}


The canonical projection $\proy_M:\overline{\Rr}_M\to\tilde{\overline{\Rr}}_M$ is continuous. As $\phi_M$ is compatible with $\proy_M$, there exists a continuous map $\tilde{\phi}_M:\tilde{\Rr}_M\to\R^3$ such that the following diagram is commutative. In addition, $\tilde{\phi}_M(\tilde{\Rr}_M)=\phi(\Rr_M)\subset\Ss$.
\begin{center}
\begin{tikzpicture}[scale=1.5]
\node (A0) at (0,1) {$\Rr_M$};
\node (A) at (1,1) {$\overline{\Rr}_M$};
\node (C0) at (0,0) {$\tilde{\Rr}_M$};
\node (C) at (1,0) {$\tilde{\overline{\Rr}}_M$};
\node (D) at (2,0) {$\R^3$};

\path[->,font=\scriptsize,>=angle 90]
(A) edge node[above right]{$\phi_M$} (D)
(A) edge node[left]{$\proy_M$} (C)
(A0) edge node[left]{$\proy_M|_{\Rr_M}$} (C0)
(C) edge node[below]{$\tilde{\phi}_M$} (D);

\path[right hook->,font=\scriptsize,>=angle 90]
(A0) edge node[left]{} (A)
(C0) edge node[left]{} (C);

\end{tikzpicture}
\end{center}

\paragraph{}\hspace{-5mm}\label{lem:dos} {\em $\tilde{\overline{\Rr}}_M$ is homeomorphic to a disc and its boundary is the set
$$
\proy_M(\{\rho=M\}\cup\{\theta=0\}\cup\{\theta=\tfrac{\text{\textpi}}{2}\}).
$$}
\noindent\begin{proof}
Identify $\R^2$ with $\C$ (interchanging the order of the variables $(\rho,\theta)\leadsto(\theta,\rho)$) and consider the continuous map 
$$
\mu:\C\to\C,\ z:=\theta+\sqrt{-1}\rho\mapsto w:=u+\sqrt{-1}v=(\tfrac{4}{\text{\textpi}}z-1)^2.
$$
The restriction $\mu|_{\{\rho>0\}}:\{\rho>0\}\to\C\setminus([0,+\infty)\times\{0\})$ is a homeomorphism and the image of $\overline{\Rr}_M\setminus\{\rho=0\}$ is
$$
\Tt_M:=\big\{(u,v)\in\R^2:\ (\tfrac{\pi v}{8M})^2-(\tfrac{4M}{\text{\textpi}})^2\leq u\leq1-(\tfrac{v}{2})^2\big\}\setminus([0,1]\times\{0\}).
$$
The closure $\overline{\Tt}_M$ of $\Tt_M$ is a compact convex set (as it is a closed bounded intersection of two convex sets). By \cite[Cor.11.3.4]{ber1} $\overline{\Tt}_M$ is homeomorphic to a closed disc. In addition
$$
\mu|_{\{\rho=0\}}:\{\rho=0\}\to[0,+\infty)\times\{0\},\ \theta\mapsto(\tfrac{4}{\text{\textpi}}\theta-1)^2
$$
transforms the segment $[0,\tfrac{\text{\textpi}}{2}]\times\{0\}$ onto the interval $[0,1]$. The preimage of $t_0\in[0,1]$ under $\mu|_{\{\rho=0\}}$ is 
$$
\{\theta_1:=\tfrac{\text{\textpi}}{4}(1+\sqrt{t_0}),\theta_2:=\tfrac{\text{\textpi}}{4}(1-\sqrt{t_0})\}.
$$

\begin{figure}[!ht]
\begin{center}
\begin{tikzpicture}[scale=0.99]

\begin{scope}[scale=0.5,xshift=4cm, yshift=5cm]
\draw[line width=1.5pt,rotate=-90,draw,fill=gray!40] (6,11) parabola bend (0,20) (-6,11) parabola bend (0,10) (6,11);
\draw[->, line width=1pt] (9,0) -- (21,0);
\draw[->, line width=1pt] (19,-6.5) -- (19,6.5);

\draw (20.5,-0.5) node{$1$};
\draw (18.5,-0.5) node{$0$};
\draw (14,2.5) node{$\overline{\Tt}_M$};
\draw[white,line width=1.5pt,dotted] (19,0) -- (20,0);
\draw (19,0) node{\large$\bullet$};
\draw (20,0) node{\large$\bullet$};
\end{scope}

\draw[draw=none, fill=gray!40] (0.5,0) to (3.5,0) to (3.5,5) to (0.5,5) to (0.5,0);
\draw[line width=1.5pt] (3.5,0) to (3.5,5) to (0.5,5) to (0.5,0);
\draw[->, line width=1pt] (-0.5,0) -- (4.5,0);
\draw[->, line width=1pt] (0.5,-1) -- (0.5,6);
\draw[white,line width=1.5pt,dotted] (0.5,0) to (3.5,0);
\draw[line width=1.5pt] (2,-0.25) -- (2,0.25);
\draw [line width=1.5pt, ->] (4.5,2.5)--(5.25,2.5) node [above] {$\mu$} -- (6,2.5);
\draw (0.25,-0.25) node{$0$};
\draw (3.5,-0.25) node{$\tfrac{\text{\textpi}}{2}$};
\draw (4.4,-0.25) node{$\theta$};
\draw (2.25,-0.25) node{$\tfrac{\text{\textpi}}{4}$};
\draw (0.25,5) node{$M$};
\draw (2.25,2.5) node{$\overline{\Rr}_M$};
\draw (0.25,2.5) node{$\rho$};
\end{tikzpicture}
\end{center}
\caption{Behavior of the map $\mu:\overline{\Rr}_M\to\overline{\Tt}_M$.}
\end{figure}

As $\theta_1=\tfrac{\text{\textpi}}{2}-\theta_2$, the map $\lambda:=\mu|_{\overline{\Rr}_M}:\overline{\Rr}_M\to\overline{\Tt}_M$ factors through $\tilde{\overline{\Rr}}_M$ and there exists a continuous map $\tilde{\lambda}:\tilde{\overline{\Rr}}_M\to\overline{\Tt}_M$ such that the following diagram is commutative.
\begin{center}
\begin{tikzpicture}[scale=1.5]
\node (A0) at (0,1) {$\Rr_M$};
\node (A) at (1,1) {$\overline{\Rr}_M$};
\node (C0) at (0,0) {$\tilde{\Rr}_M$};
\node (C) at (1,0) {$\tilde{\overline{\Rr}}_M$};
\node (D) at (2,0) {$\Ss_M$};

\path[->,font=\scriptsize,>=angle 90]
(A) edge node[above right]{$\lambda$} (D)
(A) edge node[left]{$\proy_M$} (C)
(A0) edge node[left]{$\proy_M|_{\Rr_M}$} (C0)
(C) edge node[below]{$\tilde{\lambda}$} (D);

\path[right hook->,font=\scriptsize,>=angle 90]
(A0) edge node[left]{} (A)
(C0) edge node[left]{} (C);
\end{tikzpicture}
\end{center}
The map $\tilde{\lambda}$ is continuous and bijective and it maps the compact set $\tilde{\overline{\Rr}}_M$ onto the Hausdorff space $\overline{\Tt}_M$, so it is a homeomorphism. Consequently, $\tilde{\overline{\Rr}}_M$ is homeomorphic to a disc and its boundary is $\proy_M(\{\rho=M\}\cup\{\theta=0\}\cup\{\theta=\tfrac{\text{\textpi}}{2}\})$, as required.
\qed
\end{proof}

\paragraph{}\hspace{-5mm}\label{prop4}
{\em Fix $B\ge A>0$ and consider the warped discs $\Dd_1$ and $\Dd_2$ introduced in \eqref{d1} and \eqref{d2}. Then there exists $M>0$ such that the boundary map $\partial\tilde{\phi}_M:\partial\tilde{\overline{\Rr}}_M\to\R^3$ meets transversally once both discs $\Dd_1$ and $\Dd_2$.}

\vspace{2mm}
\noindent\begin{proof} 
As $\Dd_1$ and $\Dd_2$ are bounded set, there exists $M_0>0$ such that $\Dd_1\cup\Dd_2\subset\{\|(x,y,z)\|<M_0\}$. Take $M:=4M_0$ and consider the set $\tilde{\overline{\Rr}}_M$ and the continuous map $\tilde{\phi}_M$ introduced in paragraph \ref{lem:dos0}.

\vspace{2mm}
We claim: \em the boundary map $\partial\tilde{\phi}_M:\partial\tilde{\overline{\Rr}}_M\rightarrow\R^3$ meets transversally once $\Dd_1$\em.

\begin{figure}[!ht]
\begin{center}
\includegraphics[height=6cm]{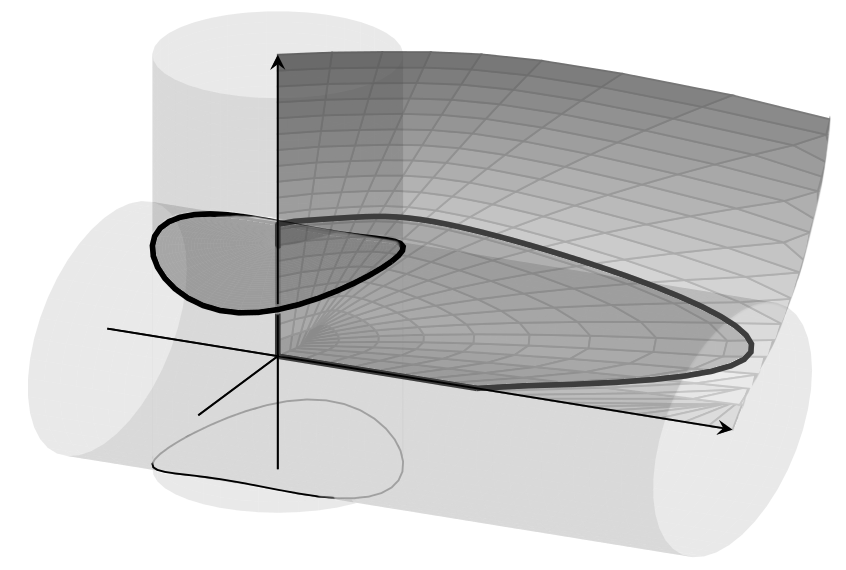}
\end{center}
\caption{The boundary map $\partial\tilde{\phi}_M:\partial\tilde{\overline{\Rr}}_M\rightarrow\R^3$ meets transversally once $\Dd_1$.}
\end{figure}

Consider the parameterization of $\partial\tilde{\overline{\Rr}}_M$ given by
$$
\beta_1(t):=
\begin{cases}
\proy_M(t,\frac{\text{\textpi}}{2}),&\text{if $0\le t\le M$},\\
\proy_M(M,M+\frac{\text{\textpi}}{2}-t),&\text{if $M<t\le M+\frac{\text{\textpi}}{2}$},\\
\proy_M(2M+\frac{\text{\textpi}}{2}-t,0),&\text{if $M+\frac{\text{\textpi}}{2}<t\le 2M+\frac{\text{\textpi}}{2}$}.
\end{cases}
$$ 
We have 
$$
\alpha_1(t):=\tilde{\phi}_M\circ\beta_1(t)=\begin{cases}
\phi(t,\frac{\text{\textpi}}{2}),&\text{if $0\le t\le M$},\\
\phi(M,M+\frac{\text{\textpi}}{2}-t),&\text{if $M<t\le M+\frac{\text{\textpi}}{2}$},\\
\phi(2M+\frac{\text{\textpi}}{2}-t,0),&\text{if $M+\frac{\text{\textpi}}{2}<t\le 2M+\frac{\text{\textpi}}{2}$}.
\end{cases}
$$ 

\begin{figure}[!ht]
\begin{center}
\begin{tikzpicture}[scale=0.99]
\draw[->, line width=1pt] (-0.5,0) -- (5.5,0);
\draw[->, line width=1pt] (0,-0.5) -- (0,3.5);
\draw[gray,line width=2pt,fill=gray!20] (0,3) to (5,3) to (5,0) to (0,0);
\draw[gray,line width=2pt,->,>=angle 45] (0,3) to (2.75,3);
\draw[gray,line width=2pt,->,>=angle 45] (5,3) to (5,1.25);
\draw[gray,line width=2pt,->,>=angle 45] (5,0) to (2.25,0);
\draw[->, line width=1pt] (6.5,0) -- (12.5,0);
\draw[->, line width=1pt] (7,-0.5) -- (7,3.5);
\draw[gray,line width=2pt,fill=gray!20] (7,3) to (12,3) to (12,0) to (7,0);
\draw[gray,line width=2pt,->,>=angle 45] (12,3) to (9.25,3);
\draw[gray,line width=2pt,->,>=angle 45] (12,0) to (12,1.75);
\draw[gray,line width=2pt,->,>=angle 45] (7,0) to (9.75,0);
\draw (-0.25,-0.25) node{$0$};
\draw (-0.3,1.5) node{$\theta$};
\draw (-0.3,3) node{$\tfrac{\text{\textpi}}{2}$};
\draw (2.5,-0.35) node{$\rho$};
\draw (5,-0.25) node{$M$};
\draw (0,0) node{{\color{gray}\large$\bullet$}};
\draw (0,3) node{{\color{gray}\large$\bullet$}};
\draw (1.5,1.5) node{identified!};
\draw[->,thick] (1.25,1.75) -- (0.2,2.8);
\draw[->,thick] (1.25,1.25) -- (0.2,0.2);
\draw (2.5,3.5) node{$\beta_1(t)$};
\draw (6.75,-0.25) node{$0$};
\draw (6.7,1.5) node{$\theta$};
\draw (6.7,3) node{$\tfrac{\text{\textpi}}{2}$};
\draw (9.5,-0.35) node{$\rho$};
\draw (12,-0.25) node{$M$};
\draw (7,0) node{{\color{gray}\large$\bullet$}};
\draw (7,3) node{{\color{gray}\large$\bullet$}};
\draw (8.5,1.5) node{identified!};
\draw[->,thick] (8.25,1.75) -- (7.2,2.8);
\draw[->,thick] (8.25,1.25) -- (7.2,0.2);
\draw (9.5,3.5) node{$\beta_2(t)$};
\end{tikzpicture}
\end{center}
\caption{Behavior of the paths $\beta_1$ and $\beta_2$.}
\end{figure}

Choose $0<\varepsilon<\min\{B,M_0-B\}$ and consider the homeomorphism
$$
\zeta_1:\R^3\to\R^3,\ (x,y,z)\mapsto(x,y,z-\xi_1(x,y)),
$$
where $\xi_1$ is the (continuous) semialgebraic function introduced in \eqref{xi1}. Denote $\Dd_1(\varepsilon):=\zeta_1^{-1}({\mathbb D}_A(\varepsilon))$. It is enough to check:
$$
\alpha_1^{-1}(\Dd_1(\varepsilon))=(B-\varepsilon,B+\varepsilon). 
$$

Pick $p_0:=\alpha_1(t_0)\in\text{Im}(\alpha_1)$. We distinguish three cases:

\begin{itemize}
\item[(i)] If $0\le t_0\le M$, then $\zeta_1(p_0)=(\zeta_1\circ\phi)(t_0,0)=(0,0,t_0-B)$. Consequently, $\zeta_1(p_0)\in{\mathbb D}_A(\varepsilon)$ if and only if $-B<-\varepsilon<t_0-B<\varepsilon<M-B$. 

\item[(ii)] If $M<t_0\le M+\frac{\text{\textpi}}{2}$, we have by \eqref{eq:1}
$$
\dist(p_0,{\bf0})\geq\tfrac{M}{2}=2M_0>\sqrt{2}M_0>\dist(q,{\bf0})
$$ 
for each $q\in\Dd_1(\varepsilon)$. Therefore $p_0\notin\Dd_1(\varepsilon)$.

\item[(iii)] If $M+\frac{\text{\textpi}}{2}<t_0\le 2M+\frac{\text{\textpi}}{2}$, then 
$$
p_0=\alpha_1(t_0)=\phi(2M+\tfrac{\text{\textpi}}{2}-t_0,0)=(2M+\tfrac{\text{\textpi}}{2}-t_0,0,0),
$$ 
so $\zeta_1(p_0)=(2M+\tfrac{\text{\textpi}}{2}-t_0,0,-B)$. As $\varepsilon<B$, it holds $\zeta_1(p_0)\not\in{\mathbb D}_A(\varepsilon)$, so $p_0\not\in\Dd_1(\varepsilon)$.
\end{itemize}
We conclude $\alpha_1^{-1}(\Dd_1(\varepsilon))=(B-\varepsilon,B+\varepsilon)$, so $\alpha_1$ meets transversally once $\Dd_1$. 

\vspace{2mm}
Analogously one shows: \em the boundary map $\partial\tilde{\phi}_M:\partial\tilde{\overline{\Rr}}_M\rightarrow\R^3$ meets transversally once $\Dd_2$\em.

\begin{figure}[!ht]
\begin{center}
\includegraphics[height=6cm]{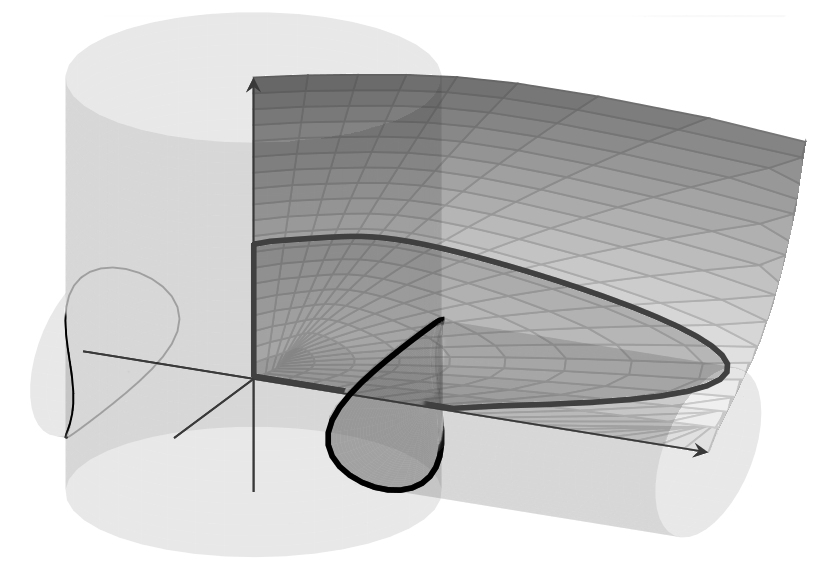}
\end{center}
\caption{The boundary map $\partial\tilde{\phi}_M:\partial\tilde{\overline{\Rr}}_M\rightarrow\R^3$ meets transversally once $\Dd_2$.}
\end{figure}

Consider in this case the parameterization of $\partial\tilde{\overline{\Rr}}_M$ given by
$$
\beta_2(t):=
\begin{cases}
\proy_M(t,0),&\text{if $0\le t\le M$},\\
\proy_M(M,t-M),&\text{if $M<t\le M+\frac{\text{\textpi}}{2}$},\\
\proy_M(2M+\frac{\text{\textpi}}{2}-t,\frac{\text{\textpi}}{2}),&\text{if $M+\frac{\text{\textpi}}{2}<t\le 2M+\frac{\text{\textpi}}{2}$}.
\end{cases}
$$ 
We have
$$
\alpha_2(t):=\tilde{\phi}_M\circ\beta_2(t)=
\begin{cases}
\phi(t,0),&\text{if $0\le t\le M$},\\
\phi(M,t-M),&\text{if $M<t\le M+\frac{\text{\textpi}}{2}$},\\
\phi(2M+\frac{\text{\textpi}}{2}-t,\frac{\text{\textpi}}{2}),&\text{if $M+\frac{\text{\textpi}}{2}<t\le 2M+\frac{\text{\textpi}}{2}$}.
\end{cases}
$$ 

Proceed as above keeping the same values for $A$ and $\varepsilon$ and using in this case the homeomorphism
$$
\zeta_2:\R^3\to\R^3,\ (x,y,z)\mapsto(z,y,x-\xi_2(z,y)),
$$
where $\xi_2$ is the (continuous) semialgebraic function introduced in \eqref{xi2}, to prove that $\alpha_2$ meets transversally once the warped disk $\Dd_2$.\qed
\end{proof}

\paragraph{}\hspace{-3mm}By \ref{lem:dos} $\overline{\Rr}_M$ is homeomorphic to a closed disc. By Proposition~\ref{prop:top} applied to the continuous map $\phi_M:\overline{\Rr}_M\to\R^3$ and \ref{prop4}, we deduce that the boundaries of both warped discs $\Dd_1$ and $\Dd_2$ meet $\phi_M({\Rr}_M)\subset\Ss$. Thus, \ref{311} holds, as required.\qed

\end{document}